\newtheorem{thm}{Theorem}[section]
\theoremstyle{remark}
\theoremstyle{definition}
\DeclareMathAlphabet{\mathpzc}{OT1}{pzc}{m}{it}
\def\R{\mathbb{ R}}
\newcommand{\be}{\begin{equation}}
\newcommand{\ee}{\end{equation}}
\newcommand{\ba}{\begin{array}}
\newcommand{\ea}{\end{array}}
\newcommand{\bg}{\begin{gathered}}
\newcommand{\eg}{\end{gathered}}
\renewcommand{\l}{\lambda}
\newcommand{\f}{\phi}
\newcommand{\bea}{\begin{eqnarray}}
\newcommand{\eea}{\end{eqnarray}}
\newcommand{\Sum}{\sum_{n=0}^\infty}
\def\@email#1#2{%
 \endgroup
 \patchcmd{\titleblock@produce}
  {\frontmatter@RRAPformat}
  {\frontmatter@RRAPformat{\produce@RRAP{*#1\href{mailto:#2}{#2}}}\frontmatter@RRAPformat}
  {}{}
}%
\begin{document}

\preprint{AIP/123-QED}

\title{On the Meixner–Pollaczek polynomials and the Sturm--Liouville problems}
\medskip
\medskip
\author{Mourad E. H.  Ismail$^{*}$ and Nasser Saad}
\affiliation{${}^*$Department of Mathematics, University of Louisiana at Lafayette, 
Lafayette, LA 70504-3568 USA\\ 
${}^1$School of Mathematical and Computational Sciences,
University of Prince Edward Island, 550 University Avenue,
Charlottetown, PEI, Canada C1A 4P3..}

\begin{abstract}
This work provides a detailed study of Meixner–Pollaczek polynomials and employs the central difference operator to study the Sturm--Liouville problem. It presents two linearly independent solutions to the recursion relation, along with the associated difference equations. Additionally, the establishment of second-kind functions is discussed.
\end{abstract}
\keywords{
Meixner--Pollaczek polynomials; orthogonal polynomials;
Sturm--Liouville problem; recursion formula; functions of the second kind.}

 \pacs{39A70;34B24; 33C45; 33C47; 33C90.}


\maketitle
 \section{Introduction}\label{Sec:Into}
\noindent The Meixner–Pollaczek polynomials are a family of orthogonal polynomials.  They were first introduced by Meixner \cite{m1934} and later rediscovered and generalized by Pollaczek \cite{Poll50}, see also \cite{a1985,c1978,e1953,i2005,k2010,k1989}.  They have, for $0<\phi<\pi$, $\lambda>0$ and $x\in(-\infty,\infty)$, the hypergeometric representation
 \begin{equation}\label{eq1}
P_n^{(\lambda)}(x;\phi)=\frac{(2\lambda)_n}{n!}e^{i\,n\,\phi}{}_2F_1\left(\begin{array}{cc}-n,&\lambda+i\, x\\ 2\lambda\end{array}\bigg|1-e^{-2i\,\phi}\right),\quad i=\sqrt{-1},~n=0,1,2,\cdots,
\end{equation}
and 
\begin{equation}\label{eq2}
P_n^{(\lambda)}(x;\phi)= e^{i n \phi } \sum _{k=0}^n \frac{(\lambda+i x)_k (\lambda -i x)_{n-k}}{k! (n-k)!}e^{-2i k \phi}
\end{equation}
where $(\alpha)_k$ is the Pochhammer symbol defined by $(\alpha)_0=1, (\alpha)_n=\alpha(\alpha+1)\cdots(\alpha+n-1)$, $(-n)_n=(-1)^n\, n!$ and $(-n)_k=0$ for $n< k$.
They have the generating function
\begin{equation}\label{eq3}
\sum_{n=0}^\infty P_n^{(\lambda)}(x;\phi)t^n= (1-t e^{i \phi})^{-(\lambda-i x)} (1-t e^{-i \phi})^{-(\lambda+i x)}. 
\end{equation} 
They are orthogonal on the real line with respect to the weight function
\begin{equation}\label{eq4}
\omega(x,\lambda,\phi)=e^{(2\phi-\pi)\,x} |\Gamma(\lambda+ix)|^2
\end{equation}
with the orthogonality relation
\begin{align}\label{eqort}
\int_{-\infty}^\infty {P}_n^{(\lambda)}(x;\phi){P}_m^{(\lambda)}(x;\phi) \,  \omega(x,\lambda,\phi) dx = 
\frac{2\,\pi\, \Gamma(n+2\lambda)}{(2\sin\phi)^{2\lambda} n!}\,\delta_{mn}
\end{align}
where the Kronecker delta $\delta_{mn}=1$ if $m=n$, otherwise it is zero.
\vskip0.1true in
\noindent The raising and lowering relations of the Meixner--Pollaczek polynomials involve the   the central difference operator   $T$  defined by \cite[p. 436]{o2010} 
 \begin{equation}\label{t1}
 (Tf)(x) = \frac{f(x+i/2)-f(x-i/2)}{i}
 \end{equation}
 and acting on a space of polynomials and extended by linearity to the closure of the space of polynomials.
 If we define the polynomials $\{ \phi_n^{(\lambda)}(x) \}$ by
\begin{equation}
 \phi_n^{(\lambda)}(x) = (\lambda + (1-n)/2 + ix)_n,\quad n=0,1,2,\cdots,
\end{equation}
we see that in the $T$-calculus, these polynomials have a similar role as $\{x^n\}$ in differential calculus.
 Indeed 
\begin{equation} \label{eqTf}
(T\phi_n^{(\lambda)})(x) = i \,n\, \phi^{(\lambda)}_{n-1}(x)
\end{equation}
and for $k\leq n$
\begin{equation} \label{eqtk}
(T^k\phi^{(\lambda)}_n)(x) =(-i)^k\, (-n)_k\, \f^{(\l)}_{n-k}(x).
\end{equation}
Using equation \eqref{eq1}, it is straightforward for verify the lowering relation
\begin{align}\label{o3}
{T P_n^{(\lambda)}(x;\phi)=2\,\sin\phi\, P_{n-1}^{(\lambda+1/2)}(x;\phi)}
\end{align}
and in general
\begin{align}\label{o5}
{T^k P_n^{(\lambda)}(x;\phi)=(2\,\sin\phi)^k\, P_{n-k}^{(\lambda+k/2)}(x;\phi).}
\end{align}
Further by means of the weight function  \eqref{eq4}, it is not difficult to show that the raising relation is 
\bea
\label{eqraise}
T[\omega(x;\l,\f)\, P_n^{(\lambda)}(x;\phi)] = -(n+1)\, \omega(x;\l-1/2,\f)\, P_{n+1}^{(\lambda-1/2)}(x;\phi).
\eea
It is noteworthy that if we substitute $t$ with $ct$ in the generating functions presented in \eqref{eq3}, and subsequently select  $ce^{i\phi} = e^{i\theta}$ and $ce^{-i\phi} = e^{i\psi}$, we derive the generating function
\begin{equation}\label{eq13}
\sum_{n=0}^\infty P_n^{(\lambda)}(x;\theta,\psi)\, t^n= (1-t e^{i\theta})^{-(\lambda-i x)} (1-t e^{i \psi})^{-(\lambda+i x)},
\end{equation} 
for the generalized Meixner-Pollaczek polynomials \cite{a2013}
 \begin{equation}\label{eq14}
P_n^{(\lambda)}(x;\theta,\psi)=\frac{(2\lambda)_n}{n!}e^{i n \theta}{}_2F_1\left(\begin{array}{cc}-n,&\lambda+i\, x\\ 2\lambda\end{array}\bigg|
1-e^{i (\psi-\theta)}\right),~n=0,1,2,\cdots.
\end{equation}
which is essentially equivalent to equations \eqref{eq3} and \eqref{eq1}, respectivelly.
\vskip0.1true in
\noindent  In \S 2 we introduce the analogue of $e^{ixt}$ for the $T$ calculus, that is a function 
$E^{\lambda}(x, t)$ which satisfies $TE^{\lambda}(x, t)= it E^{\lambda}(x, t)$. 
 In \S 3 we expand the function  $E^{\lambda}(x, t)$ into a series of Meixner--Polllaczek polynomials, see \eqref{eqPW}. This is the analogue of the expansion of a plane wave $e^{ixt}$ in spherical harmonics. Section 4 investigates the Sturm--Liouville theory of the second-order operator 
 $[1/w(x)[T[pT]$. In Section 5, we determine the large degree asymptotics of 
 the Meixner--Pollaczek polynomials and the corresponding numerator polynomials \cite[Section 2.3]{i2005}. We then give a new derivation of the orthogonality relation 
 \eqref{eqort}.   This technique modifies the technique in \cite{Poll56} to an unbounded domain.  Section 6 contains a treatment of the functions of the second kind, where we show that they satisfy the same three-term recurrence relation of the Meixner--Pollaczek polynomials and have the same raising and lowering operators.

\section{The exponential function $E^{\lambda}(x,t)$}

\noindent The $T$-analogue of $e^{ixt}$  is
\bea\label{eq15}
E^{\lambda}(x, t) = g_\lambda(t) \sum_{n=0}^\infty \frac{(\lambda+(1-n)/2+ix)_n}{n!} \, t^n,
\eea
where $g_\lambda(t)$ is a constant chosen such that $E^{\lambda}(0, t)=1$.
The linearity of the operator $T$ and \eqref{eqTf} imply
\begin{align}\label{td}
TE^{\lambda}(x, t) =i\, t\,E^{\lambda}(x,t).
\end{align}

\begin{thm}
The $T$-exponential functions \eqref{eq15} is 
 \begin{equation}
E^{\lambda}(x, t) =\left(\frac{t}{2}+\sqrt{1+\frac{t^2}{4}}\right)^{2ix}=
\exp\left(2ix\, {arcsinh}\left(\frac{t}{2}\right)\right).
\end{equation}
\end{thm}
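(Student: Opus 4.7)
The plan is to evaluate the defining series \eqref{eq15} in closed form in $t$ and then read off $g_\lambda(t)$ from the normalization $E^\lambda(0,t)=1$. The first move is to exploit the symmetry of $\phi_n^{(\lambda)}(x)=(\lambda+(1-n)/2+ix)_n$, whose $n$ consecutive factors are symmetric about $a:=\lambda+ix$. Pairing factors across the center yields
\[
\phi_{2m}^{(\lambda)}(x) = (-1)^m \bigl(\tfrac12-a\bigr)_m \bigl(\tfrac12+a\bigr)_m, \qquad \phi_{2m+1}^{(\lambda)}(x) = a\,(-1)^m (1-a)_m (1+a)_m.
\]
Splitting \eqref{eq15} by the parity of $n$ and using the duplication identities $(2m)!=4^m m!\,(1/2)_m$ and $(2m+1)!=4^m m!\,(3/2)_m$, the series becomes a combination of two Gauss hypergeometric functions with argument $-t^2/4$: one of type ${}_2F_1\bigl(\tfrac12+a,\tfrac12-a;\tfrac12;\cdot\bigr)$ and one of type $at\cdot{}_2F_1(1+a,1-a;\tfrac32;\cdot)$.

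The second step is to sum these hypergeometrics by means of the classical evaluations
\[
{}_2F_1\!\bigl(A,1-A;\tfrac12;-\sinh^2 u\bigr) = \frac{\cosh((1-2A)u)}{\cosh u}, \qquad {}_2F_1\!\bigl(A,1-A;\tfrac32;-\sinh^2 u\bigr) = \frac{\sinh((1-2A)u)}{(1-2A)\sinh u},
\]
specialized to $u=\operatorname{arcsinh}(t/2)$, so that $\sinh u=t/2$ and $\cosh u=\sqrt{1+t^2/4}$. The first identity applies to the even-$n$ part directly. The odd-$n$ hypergeometric has parameter sum $2$ rather than $1$, so I would first apply Euler's transformation to recast it as $(1+t^2/4)^{-1/2}\,{}_2F_1(\tfrac12-a,\tfrac12+a;\tfrac32;-t^2/4)$ before invoking the second identity.

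Combining, the even and odd parts collapse to $\cosh(2au)/\cosh u$ and $\sinh(2au)/\cosh u$ respectively, so the entire series sums to
\[
\sum_{n=0}^\infty \frac{\phi_n^{(\lambda)}(x)}{n!}\,t^n = \frac{e^{2(\lambda+ix)\operatorname{arcsinh}(t/2)}}{\sqrt{1+t^2/4}}.
\]
Evaluating at $x=0$ then yields $g_\lambda(t)=\sqrt{1+t^2/4}\,e^{-2\lambda\operatorname{arcsinh}(t/2)}$, and upon multiplying, the $\lambda$-dependent factor cancels cleanly, leaving $E^\lambda(x,t)=e^{2ix\operatorname{arcsinh}(t/2)}=\bigl(\tfrac{t}{2}+\sqrt{1+t^2/4}\bigr)^{2ix}$, where the last equality uses $e^u=\cosh u+\sinh u$. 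The main obstacle is the hypergeometric bookkeeping at the second step: identifying the two evaluations above and noticing that the odd-$n$ hypergeometric needs Euler's transformation before it fits the $c=3/2$ version. Everything else—the pairing of Pochhammer factors, the duplication of factorials, the exponential combination $\cosh+\sinh=e^{(\cdot)}$—is routine, and the automatic cancellation of the $\lambda$-dependent prefactor provides a reassuring consistency check against the manifestly $\lambda$-independent right-hand side.
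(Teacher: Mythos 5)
Your proof is correct and follows essentially the same route as the paper: split the series by parity, recognize the two resulting ${}_2F_1$'s in the variable $-t^2/4$, and sum them via the classical quadratic/trigonometric evaluations before combining $\cosh+\sinh$ into an exponential and normalizing at $x=0$. The only cosmetic difference is that the paper invokes the evaluation of ${}_2F_1(a,2-a;\tfrac32;z)$ directly, which fits the odd part ${}_2F_1(1+a,1-a;\tfrac32;\cdot)$ without the Euler transformation you insert; both ways give the same result.
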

\begin{proof} We first simplify the series in the definition of $E^{\lambda}(x, t)$ by splitting it into even and odd indices.
It is clear that 
\begin{align*}
\sum_{n=0}^\infty \frac{(\lambda+(1-n)/2+ix)_n}{n!} \, t^n   =\sum_{n=0}^\infty \frac{(\lambda+\frac12-n+ix)_{2n}}{(2n)!} \, t^{2n}+\sum_{n=0}^\infty \frac{(\lambda-n+ix)_{2n+1}}{(2n+1)!} \, t^{2n+1}.
\end{align*}
Using the Pochhammer identities $(c-n)_{2n} = (1-c)_n(c)_n(-1)^n$, $\left(\frac{1}{2}\right)_n = \frac{(2n)!}{(4^n n!)}$, and $\left(\frac{3}{2}\right)_n = \frac{(2n+1)!}{(4^n n!)}$, and employing the series representation of hypergeometric functions, we conclude that
\begin{align*}
&\sum_{n=0}^\infty \frac{(\lambda+(1-n)/2+ix)_n}{n!} \, t^n
= {}_2F_1\left(\begin{array}{cc}\frac12+ \lambda+ix &\frac12-\lambda-ix \\   
\frac12  \end{array}\bigg|  -\frac{t^2}{4}\right) +t\, (\lambda+ix){}_2F_1
 \left(\begin{array}{cc} 1+ \lambda+ix &1-\lambda-ix \\   
\frac32 \end{array}\bigg|  -\frac{t^2}{4}\right). 
\end{align*} 
By means of the identities \cite[Formulas 7.3.2.90-91]{y1990}
\begin{align*}
{}_2F_1(a,1-a;\tfrac12;z)&=\frac{1}{\sqrt{1-z}}\cos((2a-1)\arcsin\sqrt{z}),\\
{}_2F_1(a,2-a;\tfrac32;z)&=\frac{1}{2(a-1)\sqrt{z-z^2}}\sin(2(a-1)\arcsin\sqrt{z}),
\end{align*}
we deduce then 
\begin{align*}
 \sum_{n=0}^\infty \frac{(\lambda+(1-n)/2+ix)_n}{n!} \, t^n&=\frac{\cos\left(2(\lambda+ix)\arcsin\left(\frac{it}{2}\right)\right)}{\sqrt{1+\frac{t^2}{4}}}-\frac{i\sin \left(2 (\lambda+i x) \arcsin\left(\frac{it}{2}\right)\right)}{ \sqrt{1+\frac{t^2}{4}}}\\
&=\frac{1}{\sqrt{1+\frac{t^2}{4}}}\exp\left(-2i(\lambda+ix)\arcsin\left(\frac{it}{2}\right) \right).
\end{align*}
 Thus we have shown that 
\begin{align*}
E^{\lambda}(x, t)&= \frac{g_\lambda(t) }{\sqrt{1+\frac{t^2}{4}}} \exp\left(2\,(\lambda+ix)\,\text{arcsinh}\left(\frac{t}{2}\right)\right)= \frac{g_\lambda(t) }{\sqrt{1+\frac{t^2}{4}}}\left(\frac{t}{2}+\sqrt{1+\frac{t^2}{4}}\right)^{2(\lambda+ix)}.
\end{align*}
We chose $g_\lambda(t)$ such that at $x=0$, $E^{(\lambda)}(0,t)=1$, hence
$$
g_\lambda(t)=\sqrt{1+\frac{t^2}{4}}\left(\frac{t}{2}+{\sqrt{1+\frac{t^2}{4}}}\right)^{-2 \lambda}.$$
\end{proof}
\noindent Note that the function $E^{\lambda}(x, t)$  is clearly independent of $\lambda$. Therefore from now on we shall use $E(x, t)$ instead of $E^{\lambda}(x, t)$.  The odd and even parts of $E(x,t)$ are  analogues of the sine and cosine 
functions,  so we set 
\bea
\begin{gathered}
C(x,t) =\cos\left(2x\, \text{arcsinh}\left(\frac{t}{2}\right)\right),\\
S(x,t) =\sin\left(2x\, \text{arcsinh}\left(\frac{t}{2}\right)\right),
\end{gathered}
\eea
respectively. Indeed $E(x,t) = C(x,y)+i\, S(x,t)$.  It may be of interest to note that
 \begin{equation}\label{lim}
E^{\lambda}\left(x, 2  \sinh\left(\tfrac{t}2\right)\right) =e^{i\,x\,t}.
\end{equation}

\section{A plane wave formula}
\noindent In this section, we give the expansion of a plane wave, $E(x,t)$, in a series of Meixner-Pollaczek  polynomials. The main  result is the determination of the coefficients $\{g_n(t,\lambda)\}$ in the expansion
\begin{align}\label{e1}
E(x,t) = \sum_{n=0}^\infty\, g_n(t, \lambda)\, P_n^{(\lambda)}(x;\phi). 
\end{align}
This is the analogue of the expansion of the plane wave $e^{ixt}$ in spherical harmonics. The derivation uses the connection relation 
\begin{align}\label{e56}
(\lambda^2+x^2)&{P}_n^{(\lambda+1)}(x;\phi) \notag\\
& =\frac{(n+2) (n+1)}{(2 \sin\phi)^2} \bigg[P_{n+2}^{(\lambda)}(x;\phi ) -\frac{2(2 \lambda +n+1)  \cos\phi
}{n+2}P_{n+1}^{(\lambda)}(x;\phi ) 
+\frac{(2 \lambda +n)(2\lambda +n+1)}{(n+2) (n+1)}P_{n}^{(\lambda)}(x;\phi )\bigg]
\end{align}
that we  prove it first.  By using the weight function for the  Meixner--Pollaczek polynomials \eqref{eq4}, the Christoffel formula \cite[Theorem 2.7.1]{i2005} implies 
\bea
\label{eqChr}
(\lambda^2+x^2){P}_n^{(\lambda+1)}(x;\phi) = C_n
 \left| 
\begin{array}{ccc}
  {P}_{n}^{(\lambda)}(-i \lambda;\phi )&  {P}_{n+1}^{(\lambda)}(-i \lambda;\phi )& {P}_{n+2}^{(\lambda)}(-i \lambda;\phi )\\ \\
  {P}_{n}^{(\lambda)}(i \lambda;\phi )&  {P}_{n+1}^{(\lambda)}(i \lambda;\phi )& {P}_{n+2}^{(\lambda)}(i \lambda;\phi )\\ \\
   {P}_{n}^{(\lambda)}(x;\phi )&  P_{n+1}^{(\lambda)}(x;\phi )& {P}_{n+2}^{(\lambda)}(x;\phi )
   \end{array}
\right|\eea
for some constant $C_n$. The generating function
\eqref{eq3} shows that $P_n(\pm i\lambda) = (2\lambda)_n e^{\pm in\f}/n!$.  On the other hand the three term recursion \cite[Formula (1.7.3)]{Koe:Swa}
\bea
\begin{gathered}
2[x\sin \f +(n+\lambda)\cos  \f] P_n^{(\lambda)}(x;\phi) 
= (n+1)P_{n+1}^{(\lambda)}(x;\phi)
+ (n+2\lambda-1)P_{n-1}^{(\lambda)}(x;\phi)
\end{gathered}
\label{eq3trr}
\eea
indicates that the coefficient of $x^n$ in $P_{n}^{(\lambda)}(x)$ is $(2\sin \f)^n/n!$. 
Therefore 
\bea
C_n  \left| 
\begin{array}{ccc}
  {P}_{n}^{(\lambda)}(-i \lambda;\phi )&  {P}_{n+1}^{(\lambda)}(-i \lambda;\phi )
  \\
  {P}_{n}^{(\lambda)}(i \lambda;\phi )&  {P}_{n+1}^{(\lambda)}(i \lambda;\phi )    \end{array}
\right| = \frac{(n+2)!}{4\, n! \sin^2\f}, 
\notag
\eea
that is 
\bea
\notag
C_n = -i \frac{(n+1)!(n+2)!}{(2\lambda)_n (2\lambda)_{n+1}(2\sin\f)^3}. 
\eea
Equation \eqref{e56}  then follows  from \eqref{eqChr}. 

\vskip0.1true in
 \noindent We now establish \eqref{e1}. It is clear that $E(x,t)$ belongs to the $L_2$ spaces weighted by $\omega(x;\l,\f)$ and $\omega(x;\l+1,\f)$.  Using \eqref{o5} 
and the fact that $T^2 E(x,t) = - t^2 E(x,t)$, which is a consequence of \eqref{td}, we find that 
\bea
-t^2 \sum_{n=0}^\infty\, g_n(t, \lambda)\, P_n^{(\lambda)}(x;\phi)
= (2 \sin\f)^2 \sum_{n=0}^\infty\, g_{n+2}(t, \lambda)\, P_n^{(\lambda+1)}(x;\phi).
\eea
Thus 
 \bea
 \notag
 \begin{gathered}
 \frac{2\pi\Gamma(n+2\l +2)}{n! (2\sin \f)^{2\l}} g_{n+2}(t, \lambda) 
 = -t^2    \sum_{m=0}^\infty\, g_m(t, \lambda)\, 
 \int_{-\infty}^\infty \omega(x;\l,\f)P_m^{(\lambda)}(x;\phi) (x^2+\l^2) P_n^{(\lambda+1)}(x;\phi) dx.
\end{gathered}
\eea
We then apply \eqref{e56} and the orthogonality relation \eqref{eqort} and conclude that 
\begin{align}\label{g2}
-t^2\left[g_n(t, \lambda)-2 \cos \phi\;   g_{n+1}(t, \lambda) 
+ g_{n+2}(t, \lambda)\right] = 4\sin^2\phi\, g_{n+2}(t, \lambda).
\end{align}
This equation can be viewed as a difference equation with constant coefficients, whose general solution is given by
\begin{align}
\label{eqgn}
g_n(t,\lambda) 
&={c_1} \left(\frac{t^2 \cos\phi-it\sin\phi\sqrt{t^2+4}}{2+t^2-2 \cos (2 \phi)}\right)^n
+{c_2}\left(\frac{t^2 \cos \phi+it\sin\phi\sqrt{t^2+4}}{2+t^2-2 \cos (2 \phi)}\right)^n.
\end{align}
To find the values of the constants $c_1$ and $c_2$, we need to compute $g_0$ and $g_1$. Recall the integral  evaluation \cite{m1978}:
\begin{equation}\label{eq16}
(\sec z)^\lambda=\frac{2^{\lambda-2}}{\pi\Gamma(\lambda)}\int_{-\infty}^\infty e^{zt}\left|\Gamma\left(\frac{\lambda+it}{2}\right)\right|^2dt,\quad\quad \lambda>0, \Re(z)\in\left(-\frac{\pi}{2},\frac{\pi}{2}\right). 
\end{equation}
It is clear from \eqref{e1} and \eqref{eqort} that 
\begin{align}
g_0(t,\lambda) &= \frac{(2\sin \f)^{2\l}}{2\pi\Gamma(2\l)} \int_{-\infty}^\infty \omega(x;\l,\f) E(x,t)
dx, \notag\\
g_1(t,\lambda)&= \frac{(2\sin \f)^{2\l}}{2\pi\Gamma(2\l+1)}\int_{-\infty}^\infty \omega(x;\l,\f)E(x,t)P_1^{(\lambda)}(x;\phi) dx\notag\\
 &= \frac{(2\sin \f)^{2\l}}{2\pi\Gamma(2\l+1)}\int_{-\infty}^\infty \omega(x;\l,\f)E(x,t)[2\lambda \cos \f + 2 x \sin \f] dx. 
\end{align}
We now evaluate $g_0$ and $g_1$ using the integral evaluation \eqref{eq16} and its derivative. The result is 
\bea
\begin{gathered}
g_0(t,\lambda)
=\left(\frac{\sin\phi}{\sin\left(\phi +\sinh^{-1}\frac{t}{2}\right)}\right) ^{2\lambda}, \\
g_1(t,\lambda)=\frac{i t }{2\sin\phi }\left(\frac{\sin \phi}{\sin\left(\phi +i \sinh ^{-1}\left(\frac{t}{2}\right)\right)}\right) ^{2 \lambda +1}.
\end{gathered}
\eea
By inspection we conclude that 
 \begin{align}
g_n(t, \lambda)=\left(\frac{i t}{2 \sin\phi}\right)^n\left(\frac{\sin\phi}{\sin\left (\phi+i\, \text{arcsinh}\left(\frac{t}{2}\right)\right)}\right)^{2\lambda+n},\quad n=0,1,\cdots.
\end{align}
It is easy to see that the above solution satisfies the difference equation and the initial conditions.  
\vskip0.1true in
\noindent In conclusion, we have established the expansion
\bea
\label{eqPW}
E(x,t) \equiv\exp\left(2ix\, \text{arcsinh}\left(\frac{t}{2}\right)\right) 
    = \Sum \left(\frac{i t}{2 \sin\phi}\right)^n\left(\frac{\sin\phi}{\sin\left (\phi+i\, \text{arcsinh}\left(\frac{t}{2}\right)\right)}\right)^{2\lambda+n}P_n^{(\lambda)}(x;\phi). 
\eea

\section{Sturm--Liouville problem}
\noindent Consider the real  inner product 
\bea
\label{eqinnpro}
(f, g) = \int_\R\, f(x)\, g(x)\, dx.
\eea
Here we assume that $f$ is real-valued and consider the corresponding {$L_2(\mathbb R,dx)$} space. Clearly, the operator $T$ defined by \eqref{t1} is densely defined on this space. 
\begin{thm}\label{thmA}
Let $f$ and $g$ be analytic in the strip $-1 \le \Im z \le 1$ and 
\bea
\lim_{x \to \pm \infty} \int_{-1/2}^{1/2} |f(x+iy+iu)|^2 dy=   
\lim_{x \to \pm \infty} \int_{-1/2}^{1/2} |g(x+iy+iu)|^2 dy  =0, 
\eea
uniformly for $|u| \leq 1/2$. 
Moreover, we assume that $f$ and $g$ are real-valued on $\R$. 
Then $$(Tf,g)=-(f, Tg),$$ that is $T^* = -T$. 
\end{thm}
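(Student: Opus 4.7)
The plan is to express $(Tf,g)$ as a difference of two integrals in which the imaginary shift sits on $f$, then use Cauchy's theorem to transfer that shift onto $g$, and finally reassemble to recognize $-(f,Tg)$.

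First I would expand by the definition of $T$:
\begin{equation*}
(Tf,g) \;=\; \frac{1}{i}\int_\R f(x+i/2)\,g(x)\,dx \;-\; \frac{1}{i}\int_\R f(x-i/2)\,g(x)\,dx.
\end{equation*}
The theorem then reduces to the pair of contour-shift identities
\begin{equation*}
\int_\R f(x+i/2)\,g(x)\,dx \;=\; \int_\R f(x)\,g(x-i/2)\,dx, \quad \int_\R f(x-i/2)\,g(x)\,dx \;=\; \int_\R f(x)\,g(x+i/2)\,dx,
\end{equation*}
after which combining the four terms gives
$(Tf,g) = -\frac{1}{i}\int_\R f(x)\,[g(x+i/2)-g(x-i/2)]\,dx = -(f,Tg)$.

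To establish the first identity, I would apply Cauchy's theorem to $h(z):=f(z+i/2)\,g(z)$, which by hypothesis is holomorphic on a neighborhood of the rectangle with vertices $-R,\,R,\,R-i/2,\,-R-i/2$: the assumed analyticity in the strip $-1\le\Im z\le 1$ covers every point at which $f$ or $g$ is sampled. The vanishing of $\oint h\,dz$ around this rectangle yields
\begin{equation*}
\int_{-R}^R f(x+i/2)\,g(x)\,dx \;-\; \int_{-R}^R f(x)\,g(x-i/2)\,dx \;=\; V_R^{+}-V_R^{-},
\end{equation*}
where $V_R^{\pm}$ denote the vertical-side contributions at $z=\pm R$. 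The second identity is entirely analogous, using the rectangle reflected into the upper half of the strip.

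The only non-routine step, and the place where the stated decay hypothesis is actually used, is showing $V_R^{\pm}\to 0$ as $R\to\infty$. Here I would invoke the Cauchy--Schwarz inequality on a vertical segment of length $1/2$, e.g.
\begin{equation*}
|V_R^{+}| \;\le\; \left(\int_{-1/2}^{0}\!|f(R+i/2+it)|^{2}\,dt\right)^{\!1/2}\!\left(\int_{-1/2}^{0}\!|g(R+it)|^{2}\,dt\right)^{\!1/2}.
\end{equation*}
Each factor on the right is dominated, after a trivial change of variable, by an integral of the form $\int_{-1/2}^{1/2}|\,\cdot\,(R+iy+iu)|^{2}\,dy$ for a suitable $u\in[-1/2,1/2]$, and by the hypothesized decay each such integral tends to $0$ uniformly in $u$ as $R\to\infty$. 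This is the main obstacle; once the vertical contributions are dispatched, the theorem is immediate from the algebraic rearrangement above, and the density of the domain shows $T^{*}=-T$.
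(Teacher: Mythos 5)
Your proposal is correct and follows essentially the same route as the paper: expand $(Tf,g)$ via the definition of $T$, shift each integral by $\pm i/2$ using Cauchy's theorem on a rectangle of height $1/2$, and kill the vertical sides with the Cauchy--Schwarz inequality together with the uniform $L^2$ decay hypothesis. The only cosmetic difference is that you keep the integrand as $f(z+i/2)g(z)$ and move the contour off the real axis, whereas the paper first rewrites the integrals along the lines $\Im z=\pm 1/2$ and deforms them back to $\R$; the underlying contour identity is identical.
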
 
\begin{proof}It is clear by the definition of $T$ that
\begin{align}\label{eq6.1}
(Tf,g) &= -i \int_{-\infty}^\infty \left[f\left(x+\frac{i}2\right) - f\left(x-\frac{i}2\right)\right]g(x) dx\notag \\
&= -i \lim_{R\to \infty}\int_{-R}^R f\left(x+\frac{i}2\right) g(x)\, dx +i \lim_{R\to \infty}\int_{-R}^R f\left(x-\frac{i}2\right)\,g(x)\, dx.
\end{align}
With the change of variables, we can write the right-hand side of \eqref{eq6.1} as 
\begin{align}\label{eq6.2}
(Tf,g) =  -i \lim_{R\to \infty}\int_{-R+i/2}^{R+i/2} f\left(z\right) g(z-i/2) dz+ i \lim_{R\to \infty}\int_{-R-{i}/2}^{R-{i}/2} f\left(z\right)g(z+{i}/2) dz
=I_1+I_2.\end{align}
To evaluate this integral, we consider the following paths of integration:
\begin{center}
\begin{tikzpicture}[inner sep=5pt, >=latex, 
   label/.style={auto, inner sep=1pt, circle}
   ]
\path (0,0) node (E) {$-R+i/2$}
(6,0) node (KK) {$R+i/2$}
(6,-2) node (K) {$R$}
(0,-2) node (F) {$-R$}
 (2,-2) node (EE) {\phantom{$E$}};
\draw[->] (E) -- node[label] { } (F);
\draw[->] (F) -- node[label] { } (K);
\draw[->] (K) -- node[label] { } (KK);
\draw[black,->] (E) -- node[label] {$I_1$} (KK);
\end{tikzpicture}
\end{center}
Hence we have 
\begin{align}\label{eq6.5}
I_1&=-i \lim_{R\to \infty}\int_{-R+i/2}^{-R} f\left(z\right) g\left(z-i/2\right) dz 
 -i \lim_{R\to \infty}\int_{-R}^{R} f\left(z\right) g\left(z-i/2\right) dz-i\lim_{R\to \infty}\int_{R}^{R+i/2} 
 f\left(z\right) g\left(z-i/2\right) dz.
\end{align}
While along a similar path of integrations, 
\begin{align}\label{eq6.6}
I_2&=i \lim_{R\to \infty}\int_{-R-i/2}^{-R} f\left(z\right)g\left(z+{i}/2\right) dz
+i \lim_{R\to \infty}\int_{-R}^{R} f\left(z\right)g\left(z+{i}/2\right) dz+i \lim_{R\to \infty}\int_{R}^{R-i/2} f\left(z\right)g\left(z+{i}/2\right) dz.
\end{align}
A further change of variables, equation \eqref{eq6.5} yields
\begin{align}\label{eq6.7}
I_1&= - \lim_{R\to -\infty}\int_0^{1/2} f\left(R+iy\right)\, g\left(R+iy-{i}/2\right) \,dy\notag\\
&-i \lim_{R\to \infty}\int_{-R}^{R} f\left(x\right)\, g\left(x-{i}/2\right) dx+\lim_{R\to \infty}\int_{0}^{1/2} f\left(R+iy\right) g\left(R+iy-{i}/2\right)dy.
\end{align}
While equation \eqref{eq6.6} may reads
\begin{align}\label{eq6.8}
I_2&=- \lim_{R\to -\infty}\int_{-1/2}^0 f\left(R+iy\right)\, g\left(R+iy+{i}/2\right) \,dy\notag\\
&+i \lim_{R\to \infty}\int_{-R}^{R} f\left(x\right)\, g\left(x+{i}/2\right) dx- \lim_{R\to \infty}\int_{0}^{-1/2} f\left(R+iy\right)g\left(R+iy+{i}/2\right) dy.
\end{align}
Therefore, from \eqref{eq6.7} and \eqref{eq6.8}, $i(Tf,g)$ equals 
\bea
\notag
\bg
\int_\R f(x)[g(x-i/2)-g(x+i/2)] dx \\
+ \lim_{R \to \infty} \left[\int_{1/2}^0 
f(-R +iy)g(-R+iy-i/2)i dy + \int_0^{1/2} f(R +iy)g(R+iy-i/2)i dy\right.\\
\left. - \int_{-1/2}^0 f(-R +iy)g(-R+iy+i/2)i dy - \int_0^{-1/2} 
f(R +iy)g(R+iy+i/2)i dy \right].
\eg
\eea
Therefore $(Tf,g) +(f,Tg)$ is given by 
\begin{align}\label{eq6.9}
(Tf,g) +(f,Tg)&=
 \lim_{R \to \infty}   \int_0^{1/2} \left[ f(R +iy)g(R+iy-i/2) 
+  f(R -iy)g(R-iy+i/2)\right] dy\notag \\
&- \lim_{R \to \infty} \int_0^{1/2} \left[ f(-R +iy)g(-R+iy-i/2) 
+  f(-R -iy)g(-R-iy+i/2)\right] dy.
\end{align}
It is clear that 
\bea
\notag
\bg
 \left|\int_0^{1/2}   f(R +iy)g(R+iy-i/2) dy\right|^2 
 \le  \int_0^{1/2}  \left|f(R +iy)\right|^2 dy \; 
 \int_0^{1/2} |g(R+iv-i/2)|^2 dv \to 0,  
 \eg
\eea
as $R\to \infty$. 
Similarly we estimate the remaining three integrals in equation \eqref{eq6.9}.
\end{proof}
 
\noindent We note that the identity $(Tf, g)=-(f,Tg)$ is the analogue of integration by parts.  
 \vskip0.1true in
 \noindent Now consider the following analogue of the Sturm--Liouville equation 
 \begin{align}\label{eq6.10}
 \frac{1}{\omega(x)} T [p(x)T y_n(x)] = \l_n y_n(x).
 \end{align}
  Here we assume that $\omega$ and $p$ are positive  on $\R$. 
 The following argument shows that the eigenfunctions corresponding to different eigenvalues are orthogonal with respect to $\omega$ on $\R$. Indeed 
 \begin{align}\label{eq6.11}
 (\l_m-\l_n) \int_\R \omega(x) y_m(x) y_n(x) dx 
 &= (TpTy_m, y_n)- (TpTy_n, y_m) =   -(pTy_m, T y_n) +(pTy_n,  T y_m) =0. 
 \end{align}
 
 \begin{thm}
 With the positivity of $p(x)$, the operator $- \frac{1}{\omega(x)} T [p(x)T]$ is a postive operator on the space of functions in {$L_2(\R,\omega)$}, which satisfy the assumption in Theorem \ref{thmA}.
 \end{thm}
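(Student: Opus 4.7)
The plan is to reduce the positivity of $-\frac{1}{\omega(x)}T[p(x)T\,\cdot\,]$ on $L_2(\mathbb{R},\omega)$ to a single application of the integration-by-parts identity $T^{*}=-T$ from Theorem \ref{thmA}, followed by an elementary reality argument for $Tf$. Concretely, for $f$ in the admissible class, I would write
\begin{align*}
\left(-\frac{1}{\omega(x)}\,T[p(x)Tf],\,f\right)_{\omega}
  &= -\int_{\mathbb{R}} T[p(x)Tf](x)\,f(x)\,dx
   = -\bigl(T[p\,Tf],\,f\bigr),
\end{align*}
where $(\cdot,\cdot)$ is the unweighted inner product \eqref{eqinnpro}. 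The weight $\omega$ cancels, reducing the problem to the unweighted setting on which Theorem \ref{thmA} applies.

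Next, I would apply Theorem \ref{thmA} with $g=p\,Tf$ and $h=f$ to convert
\[
-\bigl(T[p\,Tf],\,f\bigr) \;=\; \bigl(p\,Tf,\,Tf\bigr) \;=\; \int_{\mathbb{R}} p(x)\,\bigl(Tf(x)\bigr)^{2}\,dx.
\]
To conclude positivity it remains to check that $(Tf(x))^{2}\geq 0$ pointwise on $\mathbb{R}$, i.e.\ that $Tf$ is real-valued on $\mathbb{R}$. This is immediate from the hypotheses in Theorem \ref{thmA}: since $f$ is analytic in the strip $|\Im z|\le 1$ and real on $\mathbb{R}$, Schwarz reflection gives $f(x-i/2)=\overline{f(x+i/2)}$, so
\[
Tf(x)=\frac{f(x+i/2)-f(x-i/2)}{i}=\frac{f(x+i/2)-\overline{f(x+i/2)}}{i}=2\,\Im f(x+i/2)\in\mathbb{R}.
\]
Combined with $p(x)>0$, this yields $\int_{\mathbb{R}} p(x)(Tf(x))^{2}dx\geq 0$, which is exactly the asserted positivity.

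The main obstacle is the legitimacy of applying Theorem \ref{thmA} to the pair $(p\,Tf,\,f)$: one must verify that $p\,Tf$ still satisfies the analyticity in the strip $|\Im z|\le 1$ and the uniform decay hypothesis on the vertical slabs of width $1$. This is the step worth writing out carefully. Analyticity of $p\,Tf$ in the strip follows from analyticity of $p$ and $f$ in a slightly wider strip (required implicitly because $Tf(x\pm i/2)$ involves $f(x\pm i)$), while the boundary decay can be controlled by Cauchy–Schwarz applied to the vertical slab integrals of $|f|^{2}$ and $|p\,Tf|^{2}$ together with a uniform growth bound on $p$. Once these verifications are in place, the chain of equalities above is rigorous and the theorem follows.
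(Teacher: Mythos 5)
Your proposal is correct and follows essentially the same route as the paper: reduce the weighted inner product to the unweighted one, apply $T^{*}=-T$ from Theorem \ref{thmA} to get $(pTf,Tf)$, and invoke positivity of $p$. The paper states this as a one-line chain of equalities; you additionally supply the Schwarz-reflection argument showing $Tf$ is real on $\R$ and flag the need to verify that $p\,Tf$ satisfies the hypotheses of Theorem \ref{thmA} --- details the paper leaves implicit but which do not change the approach.
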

 \begin{proof}  
 This follows from 
 \bea
 \left(- \frac{1}{\omega(x)} T [p(x)T]f, f\right)_{L_2} = -( T [p(x)T]f, f)= (pTf, Tf) \ge 0. 
 \notag
 \eea
 \end{proof}

  \section{Solutions to the Meixner--Pollaczek Recursion}
  
 \noindent  In this section we establish  a generating function for the general  solution to the recurrence relation \eqref{eq3trr}. We use the generating function to determine their large  $n$ behaviour via Darboux's asymptotic method \cite{a1984,Olv}.  We shall use the Stieltjes inversion formula: 
  \bea
  \label{eqStiInv}
  F(x) = \int_{-\infty}^\infty \frac{W(t)\,  dt}{x-t} \quad \textup{implies} \quad
  W(x) = \lim_{\epsilon \to 0+}  \frac{F(x-i\epsilon) - F(x+i\epsilon)}{2\pi i}. 
  \eea 
  We shall give an explicit representation of the numerator polynomials \cite[Section 2.3]{i2005} and give a new proof of the orthogonality relation of the Meixner--Pollaczek polynomials. 
  \vskip0.1true in
\noindent  Assume $y_0$ and $y_1$ are given and $\{y_n\}$  is generated by 
  \eqref{eq3trr} for $n >0$. Let $Y(t) = \Sum y_n t^n$. Multiply the recurrence relation by $t^n$ and add for $n =1,2, \cdots$. This yields 
  \bea
  \begin{gathered}
  [1-2 t\cos \f + t^2] \partial_t Y(t) + [2\l t -2x \sin \f -2\l \cos \f ]Y(t) = y_1 -2x \sin \f \, y_0 - 2\l \cos \f \, y_0.
 \end{gathered}
  \eea
  Thus  we find that 
  \bea
  \label{eqgf}
   \begin{gathered}
  (1-t e^{i\f})^{\l-ix}  (1-t e^{-i\f})^{\l+ix}  Y(t)  
  = y_0 +[y_1 -2x \sin \f \, y_0 - 2\l \cos \f \, y_0]\, \int_0^t (1-u e^{i\f})^{\l-ix-1}  
  (1-u e^{-i\f})^{\l+ix-1}\, du.
  \end{gathered}
   \eea
   
   \noindent We now study the asymptotics of these solutions. We start with the 
   Meixner--Pollaczek  polynomials. The generating function \eqref{eq3} has the $t$-singularities $t = e^{\pm i \f}$ and the dominant part of a comparison function at 
   these singularities are  $(1-te^{-i\phi})^{-\l-ix}(1- e^{2i\phi})^{-\l+ix}$ and  
   $(1-te^{i\phi})^{-\lambda+ix}(1- e^{-2i\phi})^{-\lambda-ix}$, respectively.  Therefore 
   \bea
   \label{eqAsym1}
 P_n^{(\l)}(x;\phi) \approx \frac{(\l+ix)_n}{n!} e^{-in\phi} (1- e^{2i\phi})^{-\l+ix}
 + \frac{(\l-ix)_n}{n!} e^{in\f} (1- e^{-2i\phi})^{-\l-ix},  
    \eea
    holds for all $x$ in the complex plane. 
  This shows that
  \bea
  \label{eqDarP}
 P_n^{(\l)}(x;\phi) \approx \frac{n^{\l-ix-1}}{\Gamma(\l-ix)}e^{in\f} (1- e^{-2i\f})^{-\l-ix},
 \eea
  when $\Im x > 0$.  The numerator polynomials $P_n^{(*)}(x, \l, \f)$ satisfy the  Meixner--Pollaczek recurrence relation with initial conditions $P_0^{(*)}(x, \l, \phi)=0, P_1^{(*)}(x, \l, \phi) = 2 \sin \phi$.  The generating function \eqref{eqgf} becomes 
 \bea
 \label{eqgf*}
  (1-t e^{i\f})^{\l-ix}  (1-t e^{-i\f})^{\l+ix}  \Sum P_n^{(*)}(x, \l, \f) t^n  
  = 2\sin \f \, \int_0^t (1-u e^{i\f})^{\l-ix-1}  
  (1-u e^{-i\f})^{\l+ix-1}\, du.
 \eea 
  We then apply Darboux's method \cite{a1984} and show that 
  \bea
  \label{eqStT}
  \lim_{n\to \infty} \frac{P_n^{(*)}(x, \l, \f)}{ P_n^{(\l)}(x;\phi) } = 2\sin \f \int_0^{e^{-i\f} } 
  (1-u e^{i\f})^{\l-ix-1}  
  (1-u e^{-i\f})^{\l+ix-1}\, du
  \eea
  holds for $\Im x > 0$.  It is clear from \eqref{eqDarP} that the orthonormal polynomials, say $\{p_n(x)\}$  do not belong to $L_2$ for $\Im x > 0$, hence the moment problem has a unique solution, see \cite{akh}.  Moreover the left-hand side of \eqref{eqStT} is the Stietjes transform of the orthogonality measure, which is normalized to have 
  total mass $=1$.  It is clear from  \eqref{eqStT}  that the Stietjes transform has no poles, hence the measure is absolutely continuous. This latter fact also 
  follows from \eqref{eqAsym1} because the series $\Sum |p_n(x)|^2$ diverges for all real $x$. 
  \vskip0.1true in
\noindent   Let $W$ be the normalized weight function. Thus the inversion formula \eqref{eqStiInv} gives 
  \bea
  \begin{gathered}
  W(x) = \lim_{\epsilon \to 0+} \frac{F(x-i\epsilon) - F(x+i\epsilon)}{2\pi i}= \frac{2\sin \f}{2\pi i}  \int_{e^{-i\f} }^ {e^{i\f} }
  (1-u e^{i\f})^{\l-ix-1}  
  (1-u e^{-i\f})^{\l+ix-1}\, du.
  \end{gathered} 
  \eea
  This is a beta integral and the change of variable $u = e^{-i\f} + (e^{i\f}-e^{-i\f}) v$ transforms it to the standard form. 
  Therefore the normalized weight function is given by 
  \bea
  W(x) = \frac{(2\sin \f)^{2\l}}{2\pi  \Gamma(2\lambda)} \; e^{(2\phi-\pi)x}\, |\Gamma(\lambda+ix)|^2. 
  \eea
  We note that the generating functions \eqref{eq3} and \eqref{eqgf*} prove the explicit representation 
  \bea
  P_n^{(*)}(x, \l, \f) =2 \sin \f  \sum_{k=0}^{n-1} \frac{P_k^{(\l)}(x; \f) \, P_{n-k-1}^{(1-\l)}(-x; \f)}{n-k}.
  \eea
  
  \section{Function of the second kind} 
  \noindent Define 
 the function of the second kind  $Q_n^{(\l)}(z;\phi)$ by   
 \bea
 \label{eqfn2ndkind}
\omega(z; \l, \f) Q_n^{(\l)}(z;\phi) = \int_\R \frac{P_n^{(\l)}(t;\phi)}{z-t} \omega(t; \l, \f) dt,  
 \eea
 defined for $\Im z \ne 0$. Because we have two variables, we shall use $T_z$ and $T_t$ to denote the action of the operator $T$ on a function of $z$ or $t$. It is clear, by the definition of $T$, that $$T_z\left(\frac{1}{z-t}\right) = - T_t\left(\frac{1}{z-t}\right).$$ 
 \begin{thm}
 The function of the second kind also satisfies \eqref{o3} and \eqref{eqraise}, that is 
 \bea\label{eq48}
 \begin{gathered}
 T Q_n^{(\l)}(z;\phi) = 2\sin \f \, Q_{n-1}^{(\l+1/2)}(z;\phi), \\
 T[ \omega(z; \l, \f) Q_n^{(\l)}(z;\phi)] =  -(n+1)\,  \omega(z; \l-\tfrac12, \f) \, Q_{n+1}^{(\l-1/2)}(z;\phi).
 \end{gathered}
 \eea
  \end{thm}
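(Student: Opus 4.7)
The strategy is to expand both sides of \eqref{eq48} as integrals over $\R$ with weight $\omega(t;\lambda,\phi)$, then invoke the polynomial raising/lowering identities \eqref{o3}, \eqref{eqraise} together with the key relation $T_z[1/(z-t)] = -T_t[1/(z-t)]$ and the integration-by-parts statement $T^{\ast}=-T$ of Theorem \ref{thmA}. The hypotheses of Theorem \ref{thmA} are satisfied because $\omega(t;\lambda,\phi)$ has full exponential decay as $|t|\to\infty$ (since $0<\phi<\pi$) and $1/(z-t)$ is analytic in the required strip whenever $z$ is off the real line.

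The raising identity is immediate. Apply $T_z$ to both sides of the defining equation \eqref{eqfn2ndkind}, bring $T_z$ inside the integral, replace it by $-T_t$, and then transfer $T_t$ onto $P_n^{(\lambda)}(t;\phi)\omega(t;\lambda,\phi)$ via Theorem \ref{thmA}. The polynomial raising relation \eqref{eqraise} converts the integrand to $-(n+1)P_{n+1}^{(\lambda-1/2)}(t;\phi)\omega(t;\lambda-1/2,\phi)/(z-t)$, and one recognizes the resulting integral as $-(n+1)\omega(z;\lambda-1/2,\phi)Q_{n+1}^{(\lambda-1/2)}(z;\phi)$ by reapplying the definition \eqref{eqfn2ndkind} at parameter $\lambda-1/2$.

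For the lowering identity, multiply through by $\omega(z;\lambda+1/2,\phi)$. On the right, the polynomial lowering \eqref{o3} rewrites $2\sin\phi\,P_{n-1}^{(\lambda+1/2)}(t;\phi) = T_tP_n^{(\lambda)}(t;\phi)$; expressing $T_t$ as a divided difference, changing variables in each of the two halves, and shifting the resulting contours at $\operatorname{Im}t = \pm 1/2$ back to $\R$, together with the gamma identity $\omega(t\pm i/2;\lambda+1/2,\phi) = e^{\pm i\alpha}(\lambda\mp it)\,\omega(t;\lambda,\phi)$ where $\alpha = \phi-\pi/2$, yields a pair of integrals over $\R$ with numerators $(\lambda\pm it)P_n^{(\lambda)}(t;\phi)\omega(t;\lambda,\phi)$ and denominators $z\pm i/2-t$. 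On the left, the companion identity $\omega(z;\lambda+1/2,\phi)/\omega(z\pm i/2;\lambda,\phi) = (\lambda-1/2\pm iz)e^{\mp i\alpha}$ expresses $\omega(z;\lambda+1/2,\phi)T_zQ_n^{(\lambda)}(z;\phi)$ as an analogous pair of integrals, but with numerators $(\lambda-1/2\pm iz)P_n^{(\lambda)}(t;\phi)\omega(t;\lambda,\phi)$. Subtracting and using the algebraic identities $(\lambda-1/2+iz)-(\lambda+it) = i(z+i/2-t)$ and $(\lambda-1/2-iz)-(\lambda-it) = -i(z-i/2-t)$ cancels the denominators, and the difference collapses to $2\sin\phi\int_\R P_n^{(\lambda)}(t;\phi)\omega(t;\lambda,\phi)\,dt$, which vanishes for $n\geq 1$ by the orthogonality \eqref{eqort} against $P_0^{(\lambda)}\equiv 1$.

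The main obstacle will be the bookkeeping of signs and factors of $i$ in the lowering argument, together with justifying the contour shifts. The poles of $\Gamma(\lambda\pm it)$ lie on the imaginary axis at distance at least $\lambda$ from $\R$, so for $\lambda>1/2$ no poles are crossed when shifting the integration contours from $\operatorname{Im}t=\pm 1/2$ to the real line; the full range $\lambda>0$ then follows by analytic continuation in $\lambda$.
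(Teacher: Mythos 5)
Your proposal is correct and follows essentially the same route as the paper: the raising relation via $T_z[1/(z-t)]=-T_t[1/(z-t)]$, integration by parts (Theorem \ref{thmA}), and \eqref{eqraise}; the lowering relation via the contour-shift computation with the functional equation of the weight, the decomposition $\lambda\mp it=(\lambda\mp iz-\tfrac12)\pm i(z\pm\tfrac{i}{2}-t)$, and orthogonality against $P_0^{(\lambda)}$ to kill the residual $2\sin\phi\int_{\R}P_n^{(\lambda)}\omega\,dt$. The only cosmetic difference is that you verify that the difference of the two sides vanishes, whereas the paper transforms the right-hand side directly into $i\,Q_n^{(\lambda)}(z-\tfrac{i}{2};\phi)-i\,Q_n^{(\lambda)}(z+\tfrac{i}{2};\phi)$; the underlying identities are identical.
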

  \begin{proof} We first prove the second relation. Clearly 
 \begin{align*}
T_z[ \omega(z; \l, \f) Q_n^{(\l)}(z;\phi)]  &=  - \int_\R T_t\left[\frac{1}{z-t} \right] \omega(t; \l, \f) P_n^{(\l)}(t;\phi)dt  = \int_\R \frac{T_t[\omega(t; \l, \f) P_n^{(\l)}(t;\phi)]}{z-t} dt  \\
&= 
-(n+1)  \int_\R \frac{P_{n+1}^{(\l-1/2)}(t;\phi)}{z-t}\omega(t; \l-\tfrac12, \f) dt \\
&= -(n+1)\,  \omega(z; \l-\tfrac12, \f) \, Q_{n+1}^{(\l-1/2)}(z;\phi).
\end{align*}
To prove the lowering relation we note that
 \begin{align*}
2\sin \f \,  Q_{n-1}^{(\l+1/2)}(z;\phi) &= \frac{1}{\omega(z; \l + 1/2, \f)} 
\int_{-\infty}^\infty \frac{\omega(t; \l + 1/2, \f)} {z-t} T_t P_{n}^{(\l)}(t;\phi) \, dt\\
&= -  \frac{1}{\omega(z; \l + 1/2, \f)} 
\int_{-\infty}^\infty  P_{n}^{(\l)}(t)  T_t\left [\frac{\omega(t; \l + 1/2, \f)} {z-t} \right]\, dt,
 \end{align*}
where we used the analogue of integration by parts in the last step (see Section 4.). A calculation shows that  the integral in the above relation is 
\bea
\notag 
-\int_{-\infty}^\infty  P_{n}^{(\l)}(t;\phi) \omega(t; \l, \f)\left[\frac{e^{i \phi } (\lambda-it )}{z-\frac{i}{2}-t}+\frac{e^{-i \phi } (\lambda+it)}{z+\frac{i}{2}-t}\right]\, dt . 
\eea
Now write $\l - it$ as $\l  - iz -1/2 + i(z-t - i/2)$ and use the orthogonality of the $P_n$'s to see that 
\begin{align*}
 \frac{1}{\omega(z; \l + 1/2, \f)}&  \int_{-\infty}^\infty  P_{n}^{(\l)}(t;\phi) \omega(t; \l, \f) \frac{e^{i\f}(\l -it)}{z-t-i/2} dt\\
 &=  \frac{1}{\omega(z; \l + 1/2, \f)}  \int_{-\infty}^\infty  P_{n}^{(\l)}(t;\phi) \omega(t; \l, \f) \frac{e^{i\f}(\l  - iz -1/2)}{z-t-i/2} \, dt \\
&= \frac{i}{\omega(z-i/2; \l , \f)}  \int_{-\infty}^\infty    \frac{ P_n^{(\l)}(t;\phi)}{z-t-i/2} \omega(t; \l, \f) \, dt\\
&=i\,Q_n^{(\lambda)}(z-\frac{i}{2},\phi).
\end{align*}
Similarly we can handle the second integral to show that it is equal to $-i\,Q_n^{(\lambda)}(z+\frac{i}{2},\phi)$. 
\end{proof}

\noindent Next, we record the Rodrigues-type formula
\bea
\omega(x;\l,\f)Q_n^{(\lambda)} (x;\phi)= \frac{(-1)^n}{n!} T^n \left(\omega(x; \l+n/2, \f) Q_0^{(\lambda+n/2)}(x;\phi)\right)
\eea
that follows immediately from equation \eqref{eq48}. 
\vskip0.1true in
\noindent We note that \eqref{eqStT} implies that the Stieltjes transform of the normalized weight function $W$ is given by its right--hand side. Thus 
\bea
   \frac{(2\sin \f)^{2\l}} {\Gamma(2\l)} \int_{-\infty}^\infty 
   \frac{\omega(t;\l, \f)}{x-t} \, dt = 2\sin \f \int_0^{e^{-i\f} } 
  (1-u e^{i\f})^{\l-ix-1}  
  (1-u e^{-i\f})^{\l+ix-1}\, du,
  \eea
holds when $\Im x >0$.  This identifies $Q_0$ as
\bea
Q_0^{(\lambda)}(z;\phi) =  \frac{\Gamma(2\l)}{(2\sin \f)^{2\l-1}\omega(z;\l, \f)} \int_0^{e^{-i\f} } 
  (1-u e^{i\f})^{\l-iz-1}  
  (1-u e^{-i\f})^{\l+iz-1}\, du, 
\eea 
when $\Im z >0$, and $\overline{Q_0^{(\lambda)}(z;\phi)} = Q_0^{(\lambda)}(\bar{z};\phi)$ for all nonreal $z$.
\section{Acknowledgments and Funding}
\medskip
\noindent Partial financial support of this work under Grant No. RGPIN-2024-05913 from the
Natural Sciences and Engineering Research Council of Canada
 is gratefully acknowledged. The authors thank the anonymous referees for their insightful comments and suggestions, which have improved the manuscript.





\bibliographystyle{elsarticle-num}
\bibliography{<your-bib-database>}

\end{document}